\numberwithin{equation}{section} \textwidth=17.5cm
\newtheorem{theorem}{Theorem}[section]
\newtheorem{lemma}[theorem]{Lemma}
\newtheorem{corollary}[theorem]{Corollary}
\newtheorem{remark}[theorem]{Remark}
\numberwithin{equation}{section}
\begin{document}

\title[Local derivations on subalgebras of $\tau$-measurable operators]{Local
derivations on subalgebras of $\tau$-measurable  operators with
respect to semi-finite von Neumann algebras}

\author{Farrukh Mukhamedov  and Karimbergen Kudaybergenov}

\address[Farrukh Mukhamedov]{Department of Computational \& Theoretical Sciences\\
Faculty of Science, International Islamic University Malaysia\\
P.O. Box, 141, 25710, Kuantan\\
Pahang, Malaysia} \email{{\tt far75m@yandex.ru} {\tt
farrukh\_m@iium.edu.my}}

\address[Karimbergen Kudaybergenov]{Department of Mathematics, Karakalpak state
university,  230113 Nukus, Uzbekistan.} \email{karim2006@mail.ru}

\maketitle
\begin{abstract}
This paper is devoted to local derivations on subalgebras on the
algebra $S(M, \tau)$ of all $\tau$-measurable  operators
affiliated with a  von Neumann algebra  $M$ without abelian
summands and with   a faithful normal semi-finite trace  $\tau.$
We prove that if $\mathcal{A}$ is  a solid $\ast$-subalgebra in
$S(M, \tau)$  such that $p\in \mathcal{A}$ for all projection
$p\in M$ with  finite trace, then every local derivation  on the
algebra $\mathcal{A}$ is a derivation. This result is new even in
the case standard subalgebras on  the algebra $B(H)$ of all
bounded linear operators on a Hilbert space $H.$ We also apply our
main theorem to the algebra $S_0(M, \tau)$ of all $\tau$-compact
operators affiliated with a semi-finite von Neumann algebra $M$
and with a faithful normal semi-finite trace $\tau.$
\end{abstract}

\maketitle

\bigskip

\section{Introduction}

\medskip

Given an algebra $\mathcal{A},$ a linear operator
$D:\mathcal{A}\rightarrow \mathcal{A}$ is called a
\textit{derivation}, if $D(xy)=D(x)y+xD(y)$ for all $x, y\in
\mathcal{A}$ (the Leibniz rule). Each element $a\in \mathcal{A}$
implements a derivation $D_a$ on $\mathcal{A}$ defined as
$D_a(x)=[a, x]=ax-xa,$ $x\in \mathcal{A}.$ Such derivations $D_a$
are said to be \textit{inner derivations}. If the element $a,$
implementing the derivation $D_a,$ belongs to a larger algebra
$\mathcal{B}$ containing $\mathcal{A},$ then $D_a$ is called
\textit{a spatial derivation} on $\mathcal{A}.$ A well known
direction in the study of the local action of derivations is the
local derivation problem. Recall that a linear map $\Delta$ of
$\mathcal{A}$  is called a \textit{local derivation} if for each
$x\in \mathcal{A},$  there exists a derivation
$D:\mathcal{A}\rightarrow \mathcal{A},$  depending on $x,$ such that
$\Delta(x)=D(x).$ This notion was introduced in 1990 independently
by Kadison \cite{Kad} and Larson and Sourour \cite{Lar}. In
\cite{Kad} it was proved that every norm continuous local derivation
from a von Neumann algebra into its dual normal bimodule is a
derivation. In \cite{Lar} the same result was obtained for the
algebra of all bounded linear operators acting on a Banach space.

In the last decade the structure of derivations and local
derivations on the algebra $LS(M)$ of all locally measurable
operators affiliated with a von Neumann algebra $M$  and on its
various subalgebras have been investigated by many authors (see
\cite{AAK2007, Alb2, Nur, AK2, AK1, AK2013, AK13, AAK13, AKNA,
Ber, BPS, Ber2, Ber3, Ber2013, Had}). In \cite{Nur} local
derivations have been investigated on the algebra $S(M)$ of all
measurable operators with respect a von Neumann algebra $M$.
 In particular, it was   proved
 that, for finite type I  von Neumann algebras without abelian
  direct summands, every local derivation
 on $S(M)$ is a derivation. Moreover, in the  case of abelian von Neumann algebra $M$
  necessary
and sufficient conditions have been  obtained for the algebra
$S(M)$ to admit local derivations which are not derivations. In
\cite{Had} local derivations have been investigated on the algebra
$S(M)$ for an arbitrary  von Neumann algebra $M$ and  it was
proved
 that for a  von Neumann algebras without abelian direct
  summands every local derivation
 on $S(M)$ is a derivation.
It should be noted that the proofs of the main result in the paper
\cite{Had}   are essentially based on the fact that the von
Neumann algebra $M$ is a subalgebra in the considered algebras.
Local and $2$-local maps have been studied on different operator
algebras by many authors \cite{Nur, JMAA, AAK13, AKNA, Bre1, Had,
Kad,  Lar}.

The present paper is devoted to local derivations on subalgebras
of algebra $S(M, \tau)$ of all $\tau$-measurable operators
affiliated with a
 von Neumann algebra $M$ with a faithful normal semi-finite trace $\tau.$
  Since  in general case we do not assumed that these subalgebras
  contain the von Neumann
algebra $M,$ one cannot directly apply the methods of the papers
\cite{Had} in this setting. Moreover, in our setting description of
local derivations is an open problem. Therefore the aim of this
paper to solve such a problem.

In Section 2 we give preliminaries from the theory of
$\tau$-measurable operators affiliated with a von Neumann algebra
$M.$

 In section 3 we
consider   a von Neumann algebra $M$ without abelian summands and
with   a faithful normal semi-finite trace  $\tau.$ We prove that
if $\mathcal{A}$ is  a solid $\ast$-subalgebra in $S(M, \tau)$
such that $p\in \mathcal{A}$ for all projection $p\in M$ with a
finite trace, then every local derivation $\Delta$ on the algebra
$\mathcal{A}$ is a derivation.

In section 4 we  apply the  main theorem of the previous section
to the Arens algebra and the algebra of all $\tau$-compact
operators  affiliated with a semi-finite von Neumann algebra $M$
and with a faithful normal semi-finite trace $\tau.$

\section{Algebras of $\tau$-measurable operators}

Let  $B(H)$ be the $\ast$-algebra of all bounded linear operators
on a Hilbert space $H,$ and let $\textbf{1}$ be the identity
operator on $H.$ Consider a von Neumann algebra $M\subset B(H)$
with the operator norm $\|\cdot\|$ and  with a faithful normal
semi-finite trace $\tau.$ Denote by $P(M)=\{p\in M:
p=p^2=p^\ast\}$ the lattice of all projections in $M$ and
$P_\tau(M)=\{p\in P(M): \tau(p)<+\infty\}.$

A linear subspace  $\mathcal{D}$ in  $H$ is said to be
\emph{affiliated} with  $M$ (denoted as  $\mathcal{D}\eta M$), if
$u(\mathcal{D})\subset \mathcal{D}$ for every unitary  $u$ from
the commutant
$$M'=\{y\in B(H):xy=yx, \,\forall x\in M\}$$ of the von Neumann algebra $M.$

A linear operator  $x: \mathcal{D}(x)\rightarrow H,$ where the
domain  $\mathcal{D}(x)$ of $x$ is a linear subspace of $H,$ is
said to be \textit{affiliated} with  $M$ (denoted as  $x\eta M$)
if $\mathcal{D}(x)\eta M$ and $u(x(\xi))=x(u(\xi))$
 for all  $\xi\in
\mathcal{D}(x)$  and for every unitary  $u\in M'.$

A linear subspace $\mathcal{D}$ in $H$ is said to be
\textit{strongly dense} in  $H$ with respect to the von Neumann
algebra  $M,$ if
\begin{enumerate}
\item[1.] $\mathcal{D}\eta M;$

\item[2.] there exists a sequence of projections $\{p_n\}_{n=1}^{\infty}$
in $P(M)$  such that $p_n\uparrow\textbf{1},$ $p_n(H)\subset
\mathcal{D}$ and $p^{\perp}_n=\textbf{1}-p_n$ is finite in  $M$
for all $n\in\mathbb{N}$.
\end{enumerate}
A closed linear operator  $x$ acting in the Hilbert space $H$ is
said to be \textit{measurable} with respect to the von Neumann
algebra  $M,$ if
 $x\eta M$ and $\mathcal{D}(x)$ is strongly dense in  $H.$

 Denote by $S(M)$  the set of all linear operators on $H,$ measurable with
respect to the von Neumann algebra $M.$ If $x\in S(M),$
$\lambda\in\mathbb{C},$ where $\mathbb{C}$  is the field of
complex numbers, then $\lambda x\in S(M)$  and the operator
$x^\ast,$  adjoint to $x,$  is also measurable with respect to $M$
(see \cite{Seg}). Moreover, if $x, y \in S(M),$ then the operators
$x+y$  and $xy$  are defined on dense subspaces and admit closures
that are called, correspondingly, the strong sum and the strong
product of the operators $x$  and $y,$  and are denoted by
$x\stackrel{.}+y$ and $x \ast y.$ It was shown in \cite{Seg} that
$x\stackrel{.}+y$ and $x \ast y$ belong to $S(M)$ and these
algebraic operations make $S(M)$ a $\ast$-algebra with the
identity $\textbf{1}$  over the field $\mathbb{C}.$ Here, $M$ is a
$\ast$-subalgebra of $S(M).$ In what follows, the strong sum and
the strong product of operators $x$ and $y$  will be denoted in
the same way as the usual operations, by $x+y$  and $x y.$

It is clear that if the von Neumann algebra $M$ is finite then every linear operator
affiliated with $M$ is measurable and, in particular, a self-adjoint operator is
measurable with respect to $M$ if and only if all its
 spectral projections belong to $M$.

 Let   $\tau$ be a faithful normal semi-finite trace on
 $M.$ We recall that a closed linear operator
  $x$ is said to be  $\tau$\textit{-measurable} with respect to the von Neumann algebra
   $M,$ if  $x\eta M$ and   $\mathcal{D}(x)$ is
  $\tau$-dense in  $H,$ i.e. $\mathcal{D}(x)\eta M$ and given   $\varepsilon>0$
  there exists a projection   $p\in M$ such that   $p(H)\subset\mathcal{D}(x)$
  and $\tau(p^{\perp})<\varepsilon.$
   Denote by  $S(M,\tau)$ the set of all   $\tau$-measurable operators affiliated with $M.$

Note that if the trace $\tau$ is finite then $S(M,\tau)=S(M).$

    Consider the topology  $t_{\tau}$ of convergence in measure or \textit{measure topology}
    on $S(M, \tau),$ which is defined by
 the following neighborhoods of zero:
$$V(\varepsilon, \delta)=\{x\in S(M, \tau): \exists\, e\in P(M),
 \tau(e^{\perp})<\delta, xe\in
M,  \|xe\|<\varepsilon\},$$  where $\varepsilon, \delta$ are
positive numbers.

 It is well-known \cite{Nel} that $S(M, \tau)$ equipped with the measure topology is a complete
metrizable topological $\ast$-algebra.

\section{Local derivations on $\ast$-subalgebras  of $\tau$-measurable operators}

\medskip

Recall that $\ast$-subalgebra $\mathcal{A}\subset S(M,\tau)$ is
said to be solid, if $x\in S(M, \tau)$ and $y\in \mathcal{A}$
satisfy $|x|\leq |y|,$ then $x\in \mathcal{A}.$

The main result of this section is the following

\begin{theorem}\label{main}
 Let $M$ be a
semi-finite von Neumann algebra without abelian direct summands
and let $\tau$ be a faithful normal semi-finite trace on $M.$
Suppose that $\mathcal{A}$ is  a solid $\ast$-subalgebra in $S(M,
\tau)$ such that  $p\in \mathcal{A}$ for all $p\in P_\tau(M).$
Then every local derivation $\Delta$ on the algebra $\mathcal{A}$
is a derivation.
\end{theorem}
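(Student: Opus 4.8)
The plan is to show that a local derivation $\Delta$ on $\mathcal{A}$ is automatically a derivation by exploiting the rich projection structure that the hypotheses guarantee. The key structural inputs are: every projection $p \in P_\tau(M)$ lies in $\mathcal{A}$, the algebra is solid (so it contains the full ``ideal-like'' supply of operators dominated in absolute value), and $M$ has no abelian direct summands, which means the local structure at finite projections is genuinely noncommutative (matrix-like). A first observation I would record is that for \emph{any} local derivation, $\Delta$ annihilates nothing spurious on idempotents: for a projection $p$ with $\Delta(p)=D_p(p)$ for some derivation $D_p$, differentiating $p = p^2$ gives the relation $\Delta(p) = \Delta(p)p + p\Delta(p)$, hence $p\Delta(p)p = 0$. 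This ``off-diagonal'' constraint on the value of $\Delta$ at each projection is the workhorse of the whole argument.

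\medskip

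The core strategy I would pursue is to reduce everything to finite-trace ``matrix corners'' and use the standard fact that on such corners local derivations are derivations. Concretely, fix $p \in P_\tau(M)$. Because $M$ has no abelian summands, after passing to suitable equivalent subprojections I can find systems of matrix units inside the finite corner $pMp$ (which lies in $\mathcal{A}$ by solidity and the projection hypothesis). I would first establish additivity/linearity behavior of $\Delta$ on products involving such projections by a polarization-and-matrix-units computation: writing an arbitrary $x \in \mathcal{A}$ in block form relative to a suitable orthogonal family of equivalent projections $e_{11}, \dots, e_{nn}$ summing to an element of $P_\tau(M)$, one shows using the relation $e_{ii}\Delta(e_{ii})e_{ii}=0$ together with $\Delta(e_{ij}) = D(e_{ij})$ for the appropriate local derivations $D$ that $\Delta$ respects the Leibniz rule on all ``finite-rank'' blocks. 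The noncommutativity (guaranteed by the no-abelian-summand hypothesis) is exactly what lets me generate enough matrix units $e_{ij}$ with $i \neq j$ to pin down $\Delta$ on off-diagonal pieces and force the derivation identity on the whole finite corner $\mathcal{A}_p = p\mathcal{A}p$.

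\medskip

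Next I would pass from corners to the whole algebra by a limiting/approximation argument in the measure topology $t_\tau$. Since $p_n \uparrow \mathbf{1}$ with $p_n \in P_\tau(M)$, every $x \in \mathcal{A}$ is approximated by its compressions $p_n x p_n$, and the Leibniz identity established on each corner $\mathcal{A}_{p_n}$ should pass to the limit once I verify the requisite continuity of $\Delta$ (or at least continuity of the relevant bilinear expressions) on the relevant bounded or order-bounded pieces. A key intermediate claim to isolate is that $\Delta$ maps the corner $\mathcal{A}_p$ into a controllable piece (not spilling out uncontrollably), so that the local-to-global gluing is legitimate; solidity is what keeps the compressions inside $\mathcal{A}$ and keeps the estimates usable.

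\medskip

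The main obstacle I anticipate is precisely this global gluing step rather than the finite-corner case. On a single finite matrix corner, the classical Kadison/Larson--Sourour machinery essentially gives the result, so that part is routine modulo bookkeeping with matrix units. The delicate point is that $\mathcal{A}$ need \emph{not} contain $M$ (this is emphasized in the introduction as precisely why the methods of \cite{Had} do not transfer), so I cannot rely on the identity $\mathbf{1}$ or on global boundedness; I must run everything through finite-trace projections and control the behavior of $\Delta$ under the unbounded multiplication. Establishing that the pointwise-defined derivations $D_x$ can be organized into a single global derivation agreeing with $\Delta$ — i.e.\ upgrading ``local'' to ``global'' consistently across the net $\{p_n\}$ while respecting the strong sum and strong product in $S(M,\tau)$ — is where I expect the real work, and it is likely where solidity and the no-abelian-summand hypothesis must be combined most carefully.
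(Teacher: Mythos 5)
Your first half---compressing to finite-trace corners $p\mathcal{A}p$, using solidity to get $pMp\subseteq\mathcal{A}$ and the absence of abelian summands to make the corner genuinely noncommutative---is essentially the route the paper takes as well, except that the paper does not rebuild matrix units by hand: it invokes \cite[Theorem 1]{Had} for rings containing a von Neumann algebra without abelian direct summands. Be careful, though, with your appeal to ``classical Kadison/Larson--Sourour machinery'' on the corner: when $\tau(p)<\infty$ the algebra $p\mathcal{A}p$ still consists of unbounded operators in general (e.g.\ when $pMp$ is of type $\mathrm{II}_1$), and no norm or continuity hypothesis is available, so the ring-theoretic result of \cite{Had} is genuinely needed there. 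The serious gap, however, is your gluing step. You propose to pass from the corners to all of $\mathcal{A}$ by taking $p_n\uparrow\mathbf{1}$ with $p_n\in P_\tau(M)$ and letting the Leibniz identity for the compressions $p_nxp_n$ pass to the limit in the measure topology, ``once the requisite continuity of $\Delta$ is verified.'' But $\Delta$ is only a linear map satisfying a pointwise condition; no continuity is assumed, none can be established at this stage, and continuity of (local) derivations on algebras of measurable operators is itself a hard, separate problem. Without it, $p_nxp_n\to x$ in measure gives no information whatsoever about $\Delta(p_nxp_n)$ versus $\Delta(x)$, so the limiting argument collapses precisely at the point you yourself identify as ``the real work.''

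The paper closes this gap purely algebraically, with no topology at all. First it shows that $\Delta$ preserves the ideal $\mathcal{F}_{\tau}(\mathcal{A})$ of elements with finite-trace support, via the support estimate $\tau(s(D(x)))\leq 3\tau(s(x))$ obtained from $D(x)=D(x)s(x)+xD(s(x))$; combined with the corner argument this gives the Leibniz rule on $\mathcal{F}_{\tau}(\mathcal{A})$. Next, for $x\in\mathcal{F}_{\tau}(\mathcal{A})$ and \emph{arbitrary} $y\in\mathcal{A}$, it proves $\Delta(xy)=\Delta(x)y+x\Delta(y)$ by manipulations inside $\mathcal{F}_{\tau}(\mathcal{A})$ together with a single application of locality at the element $y(\mathbf{1}-p)$ (choosing a derivation $D$ with $\Delta(y(\mathbf{1}-p))=D(y(\mathbf{1}-p))$ and exploiting $y(\mathbf{1}-p)p=0$). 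Finally, for arbitrary $x,y\in\mathcal{A}$ it multiplies on the right by an arbitrary $q\in P_\tau(M)$, applies the previous case twice to obtain $\Delta(xy)q=[\Delta(x)y+x\Delta(y)]q$ for every such $q$, and concludes with the elementary separation fact that $zq=0$ for all $q\in P_\tau(M)$ forces $z=0$ (Remark~\ref{anni}). This right-multiplication-by-finite-projections device is the idea your outline is missing: it replaces the limit $p_n\uparrow\mathbf{1}$ by a purely algebraic statement that requires no continuity of $\Delta$ whatsoever.
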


For the proof of this theorem we need several lemmata.

Let  $M$ be a von Neumann algebra  and let  $x\in S(M, \tau).$ The
projection
$$
r(x)=\inf\{p\in P(M): xp=x\}
$$
is called the   \emph{right support} of the element  $x,$ and the
projection
$$
 l(x)=\inf\{p\in P(M): px=x\}
 $$
is called its  \emph{left support}. The projection $s(x)=r(x)\vee
l(x)$ is called the support of the element $x.$ For
$\ast$-subalgebra $\mathcal{A}\subset S(M, \tau)$ denote
$$
\mathcal{F}_{\tau}(\mathcal{A})=\{x\in \mathcal{A}: s(x)\in
P_{\tau}(M)\}.$$

The following properties of $\mathcal{F}_{\tau}(\mathcal{A})$
directly follow  from the definition.

\begin{lemma} Let  $\mathcal{A}$ be a $\ast$-subalgebra in $S(M, \tau).$ Then the following
assertions are equivalent:
\begin{enumerate}
\item  $x\in \mathcal{F}_{\tau}(\mathcal{A});$
\item  $\exists\, p\in P_{\tau}(M)$  such that  $px=x;$
\item $\exists\, p\in P_{\tau}(M)$ such that $xp=x;$
\item $\exists\, p\in P_{\tau}(M)$ such that $pxp=x.$
\end{enumerate}
\end{lemma}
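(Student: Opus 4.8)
The plan is to route all four equivalences through the chains (1) $\Rightarrow$ (4) $\Rightarrow$ (2) $\Rightarrow$ (1) and (4) $\Rightarrow$ (3) $\Rightarrow$ (1); since conditions (2)--(4) all presuppose that the fixed element $x$ lies in $\mathcal{A}$, this membership is carried along at no cost, and it suffices to track the supports. The two facts I would keep in front of me are the defining minimality of the supports: $l(x)$ is the least projection satisfying $l(x)x=x$ and $r(x)$ the least projection satisfying $xr(x)=x$, so that for a projection $q$ one has $qx=x \iff q\ge l(x)$ and $xq=x \iff q\ge r(x)$.

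First I would dispose of (1) $\Rightarrow$ (4): taking $p:=s(x)=l(x)\vee r(x)$, condition (1) gives $\tau(p)<\infty$, i.e. $p\in P_\tau(M)$, and since $p\ge l(x)$ and $p\ge r(x)$ I get $px=x$ and $xp=x$, hence $pxp=x$. The implications (4) $\Rightarrow$ (2) and (4) $\Rightarrow$ (3) are then immediate from $p^2=p$: from $x=pxp$ one reads $px=p(pxp)=pxp=x$ and $xp=(pxp)p=pxp=x$.

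The step with actual content is (2) $\Rightarrow$ (1) (and symmetrically (3) $\Rightarrow$ (1)). Assuming $px=x$ with $\tau(p)<\infty$, minimality gives $l(x)\le p$, hence $\tau(l(x))\le\tau(p)<\infty$; but condition (1) demands finiteness of the \emph{whole} support $s(x)=l(x)\vee r(x)$. To bridge this gap I would use the polar decomposition $x=v|x|$ in $S(M,\tau)$, whose partial isometry $v\in M$ satisfies $v^*v=r(x)$ and $vv^*=l(x)$; thus $r(x)$ and $l(x)$ are Murray--von Neumann equivalent, and because $\tau$ is a trace, $\tau(r(x))=\tau(l(x))<\infty$. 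Kaplansky's parallelogram law $(l(x)\vee r(x))-r(x)\sim l(x)-(l(x)\wedge r(x))$ then delivers the subadditivity bound $\tau(s(x))\le\tau(l(x))+\tau(r(x))<\infty$, so $s(x)\in P_\tau(M)$ and $x\in\mathcal{F}_\tau(\mathcal{A})$. The symmetric argument, starting from $xp=x$ so that $r(x)\le p$, gives (3) $\Rightarrow$ (1).

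I expect the main (indeed the only) obstacle to be exactly this transfer of finiteness from a single support to $s(x)$: nothing in $px=x$ alone controls $r(x)$, and it is the equality $\tau(r(x))=\tau(l(x))$ coming from the polar decomposition that makes the whole equivalence work. Everything else is formal manipulation with projections, and in particular neither solidity of $\mathcal{A}$ nor the hypothesis on abelian summands from Theorem~\ref{main} is needed at this stage.
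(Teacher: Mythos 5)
Your proof is correct, but there is nothing in the paper to measure it against line by line: the authors state this lemma with the remark that the properties ``directly follow from the definition'' and give no proof whatsoever. Your write-up supplies exactly the details they treat as routine, and it correctly isolates the one step that is not purely formal, namely passing from finiteness of the trace on a single one-sided support to finiteness on $s(x)=l(x)\vee r(x)$. Your mechanism for that step is the standard one: the polar decomposition $x=v|x|$ with $v\in M$, $v^{*}v=r(x)$, $vv^{*}=l(x)$, so that the trace property gives $\tau(l(x))=\tau(r(x))$, and then the parallelogram law $(p\vee q)-q\sim p-(p\wedge q)$ to get the subadditivity bound $\tau(p\vee q)\le\tau(p)+\tau(q)$. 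All of these ingredients are available in the paper's setting ($x\in S(M,\tau)$ is closed, densely defined and affiliated with $M$, so its polar decomposition has partial isometry in $M$; $\tau$ is a normal semi-finite trace). Two small remarks: your appeal to the ``defining minimality'' of $l(x)$ and $r(x)$ tacitly uses that the infimum in $l(x)=\inf\{p:px=x\}$ is itself in the set, i.e. $l(x)x=x$; this is true (for the left support, $px=x$ exactly when $p(H)\supseteq\overline{\mathrm{ran}\,x}$, so the set of such $p$ is closed under arbitrary infima), and is again the kind of fact the authors take for granted. And you are right that neither solidity of $\mathcal{A}$ nor the absence of abelian summands is needed here --- the lemma holds for any $\ast$-subalgebra of $S(M,\tau)$.
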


From this lemma we immediately get

\begin{corollary}
  $\mathcal{F}_{\tau}(\mathcal{A})$ is an ideal in
 $\mathcal{A}.$
\end{corollary}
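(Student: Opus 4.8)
The plan is to read everything off the four equivalent characterizations in the preceding lemma, rather than argue directly with supports. To show that $\mathcal{F}_{\tau}(\mathcal{A})$ is a two-sided ideal in $\mathcal{A}$, I would establish two things: that it is a linear subspace, and that it absorbs multiplication by arbitrary elements of $\mathcal{A}$ on both sides.

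For the subspace part, take $x, y \in \mathcal{F}_{\tau}(\mathcal{A})$. By part (2) of the lemma there are $p, q \in P_{\tau}(M)$ with $px = x$ and $qy = y$. Set $e = p \vee q$. The key small step is that $e \in P_{\tau}(M)$: this follows from the standard subadditivity estimate
\[
\tau(p \vee q) \leq \tau(p) + \tau(q) < \infty,
\]
which in turn comes from the parallelogram equivalence $p \vee q - p \sim q - p \wedge q \leq q$ together with the trace property. Since $e \geq p$ and $e \geq q$, we get $ex = x$ and $ey = y$, hence $e(x + y) = x + y$; as $x + y \in \mathcal{A}$ because $\mathcal{A}$ is a $\ast$-subalgebra, part (2) of the lemma yields $x + y \in \mathcal{F}_{\tau}(\mathcal{A})$. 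Closure under scalar multiplication is immediate from $p(\lambda x) = \lambda x$.

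For absorption, fix $x \in \mathcal{F}_{\tau}(\mathcal{A})$ and $a \in \mathcal{A}$. To handle $xa$ I would invoke part (2): choosing $p \in P_{\tau}(M)$ with $px = x$ gives $p(xa) = (px)a = xa$, so $xa \in \mathcal{F}_{\tau}(\mathcal{A})$ by (2). Symmetrically, to handle $ax$ I would invoke part (3): choosing $p \in P_{\tau}(M)$ with $xp = x$ gives $(ax)p = a(xp) = ax$, so $ax \in \mathcal{F}_{\tau}(\mathcal{A})$ by (3). In both cases the product lies in $\mathcal{A}$ because $\mathcal{A}$ is an algebra.

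The only genuine content beyond bookkeeping is the observation that the join of two finite-trace projections again has finite trace; everything else is a direct application of the equivalences $(1)\Leftrightarrow(2)\Leftrightarrow(3)$, which is precisely why the result is stated as an immediate corollary. The main (mild) obstacle to keep in mind is choosing the correct characterization for each side --- $px = x$ for left factors and $xp = x$ for right factors --- so that the finite-trace projection can be pulled past the arbitrary element $a$ without requiring $a$ itself to have finite support.
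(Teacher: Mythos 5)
Your proof is correct and is exactly the argument the paper has in mind: the paper states the corollary as an immediate consequence of the preceding lemma, and your writeup simply fills in that immediate argument (parts (2)/(3) for absorption on each side, plus the standard estimate $\tau(p\vee q)\leq\tau(p)+\tau(q)$ via the Kaplansky parallelogram rule for closure under addition). Nothing in your route differs in substance from the paper's intended one.
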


\begin{lemma} Let $\Delta$ be a local derivation on $\mathcal{A}$. Then $\Delta(\mathcal{F}_{\tau}(\mathcal{A}))\subset
\mathcal{F}_{\tau}(\mathcal{A})$.
\end{lemma}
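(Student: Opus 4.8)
The plan is to reduce everything to a single algebraic identity produced by the Leibniz rule. Fix $x\in\mathcal{F}_{\tau}(\mathcal{A})$. By the preceding equivalence lemma (condition (4)) there is a projection $p\in P_{\tau}(M)$ with $pxp=x$; note that $p\in\mathcal{A}$ by the standing hypothesis $P_{\tau}(M)\subset\mathcal{A}$, and that $px=x=xp$ follows from $pxp=x$ since $p$ is idempotent. Because $\Delta$ is a local derivation, I may choose a derivation $D\colon\mathcal{A}\to\mathcal{A}$, depending on $x$, with $\Delta(x)=D(x)$.

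Next I would apply the Leibniz rule twice to $D(x)=D(pxp)$ and simplify using $px=x=xp$, obtaining
$$
\Delta(x)=D(x)=D(p)\,x+p\,\Delta(x)\,p+x\,D(p).
$$
The whole point is that each of the three summands lies in $\mathcal{F}_{\tau}(\mathcal{A})$. Indeed $D(p)\in\mathcal{A}$ because $D$ maps $\mathcal{A}$ into itself, and $\mathcal{F}_{\tau}(\mathcal{A})$ is an ideal by the Corollary, so the outer terms $D(p)\,x$ and $x\,D(p)$ belong to $\mathcal{F}_{\tau}(\mathcal{A})$ since $x$ does. For the middle term, $p\,\Delta(x)\,p\in\mathcal{A}$ is compressed on both sides by $p$, hence it satisfies condition (4) of the equivalence lemma with the same $p\in P_{\tau}(M)$ and therefore lies in $\mathcal{F}_{\tau}(\mathcal{A})$. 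Summing, $\Delta(x)$ is a sum of three elements of the ideal $\mathcal{F}_{\tau}(\mathcal{A})$, so $\Delta(x)\in\mathcal{F}_{\tau}(\mathcal{A})$, which is the claim.

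The only point requiring care — and the one I would present most explicitly — is that the derivation $D$ depends on $x$, so one must not invoke additivity of $\Delta$ across the expansion or speak of $\Delta$ applied to $p$. This is harmless here, since the expansion is carried out entirely inside the single derivation $D$ attached to $x$, and $\Delta$ reenters only through the substitution $D(x)=\Delta(x)$ in the middle term. I do not expect a genuine obstacle: the computation in fact shows that \emph{every} derivation carries $\mathcal{F}_{\tau}(\mathcal{A})$ into itself, and the statement for the local derivation $\Delta$ is then an immediate specialization.
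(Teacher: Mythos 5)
Your proof is correct, but it finishes differently from the paper's after a common first step. Both arguments pick, by locality, a derivation $D$ with $\Delta(x)=D(x)$ and apply the Leibniz rule to a factorization of $x$ through a finite-trace projection. The paper, however, expands only once, writing $D(x)=D(xs(x))=D(x)s(x)+xD(s(x))$, and then concludes via comparison theory of projections: since left and right supports of an element are equivalent, $l(D(x)s(x))\preceq s(x)$ and $r(xD(s(x)))\preceq s(x)$, which yields the quantitative estimate $\tau(s(D(x)))\leq 3\tau(s(x))<\infty$. You instead expand twice, obtaining $D(x)=D(p)x+pD(x)p+xD(p)$ from $pxp=x$, and then argue purely algebraically: the outer terms lie in $\mathcal{F}_{\tau}(\mathcal{A})$ because it is an ideal in $\mathcal{A}$ (the Corollary), and the compressed middle term lies there by condition (4) of the equivalence lemma. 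Your route thus avoids projection equivalence and any trace inequality, relying only on the structural facts about $\mathcal{F}_{\tau}(\mathcal{A})$ already established; the paper's route is shorter on the algebraic side and produces the explicit bound $\tau(s(D(x)))\leq 3\tau(s(x))$, at the cost of invoking comparison theory. Both versions, as you observe, actually prove the stronger statement that \emph{every} derivation of $\mathcal{A}$ maps $\mathcal{F}_{\tau}(\mathcal{A})$ into itself, with locality entering only through the single substitution $\Delta(x)=D(x)$; your explicit handling of that point (not applying $\Delta$ to $p$ or using additivity of $\Delta$ across the expansion) is exactly the right care to take.
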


\begin{proof} Take any $x\in\mathcal{F}_{\tau}(\mathcal{A}).$ Then due to locality of $\Delta$ one can
find a derivation $D$ on $\mathcal{A}$ such that $\Delta(x)=D(x)$.
It is clear that
$$
l(D(x)s(x))\preceq s(x),
$$
$$
r(xD(s(x)))\preceq s(x),
$$
where $p\preceq q$ means that $p$ is equivalent  to a
subprojection of the projection $q.$  Since
$$
D(x)=D(xs(x))=D(x)s(x)+xD(s(x))
$$
we have
\begin{eqnarray*}
\tau(s(D(x))) & =& \tau(s(D(x)s(x)+xD(s(x))))\leq \\
& \leq & \tau(s(x)\vee l(D(x)s(x))\vee
r(xD(s(x))))\leq\\
& \leq & \tau(s(x))+\tau(s(x))+\tau(s(x))=3\tau(s(x)),
\end{eqnarray*}
i.e.
$$
\tau(s(D(x))) \leq 3\tau(s(x)).
$$
Thus $D(x)\in \mathcal{F}_{\tau}(\mathcal{A})$, so $\Delta(x)\in
\mathcal{F}_{\tau}(\mathcal{A})$.

Therefore, $\Delta$ maps $\mathcal{F}_{\tau}(\mathcal{A})$ into
itself.
\end{proof}

Let $p\in \mathcal{A}$ be a projection. One can see that the mapping
\begin{equation}\label{reduc}
 D^{(p)}:x\rightarrow pD(x)p,\, x\in p\mathcal{A}p
\end{equation}
is a derivation on $p\mathcal{A}p.$ Now let $\Delta$ be a local
derivation on $\mathcal{A}.$ Take an element $x\in \mathcal{A}$
and a derivation $D$ on $\mathcal{A}$ such that
$\Delta(pxp)=D(pxp).$ Then
$$
p\Delta(pxp)p=pD(pxp)p=D^{(p)}(pxp).
$$
This means that the mapping $\Delta^{(p)}$ defined similar to
\eqref{reduc} is a local derivation on $p\mathcal{A}p.$

\begin{lemma}\label{rest}
If  $\Delta$ is  a local derivation on $\mathcal{A},$ then the
restriction $\Delta|_{\mathcal{F}_{\tau}(\mathcal{A})}$ is a
derivation.
\end{lemma}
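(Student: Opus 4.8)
The plan is to show that $\Delta$ restricted to the ideal $\mathcal{F}_\tau(\mathcal{A})$ satisfies the Leibniz rule, using the fact that this ideal is an algebraic direct limit of the ``corner'' algebras $p\mathcal{A}p$ for $p\in P_\tau(M)$, together with the observation recorded just before the statement that each $\Delta^{(p)}$ is a local derivation on $p\mathcal{A}p$. First I would note that any $x\in\mathcal{F}_\tau(\mathcal{A})$ satisfies $pxp=x$ for $p=s(x)\in P_\tau(M)$ by Lemma~1, and more generally that any finite collection of elements of $\mathcal{F}_\tau(\mathcal{A})$ can be simultaneously compressed by a single projection $p\in P_\tau(M)$, namely $p=s(x_1)\vee\dots\vee s(x_n)$, since a finite join of finite-trace projections again has finite trace. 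Thus to verify $\Delta(xy)=\Delta(x)y+x\Delta(y)$ for $x,y\in\mathcal{F}_\tau(\mathcal{A})$, it suffices to work inside a single corner $p\mathcal{A}p$ with $p\in P_\tau(M)$ chosen so that $x,y,xy\in p\mathcal{A}p$.

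The key reduction is therefore to prove that every local derivation on a corner algebra $p\mathcal{A}p$ is in fact a genuine derivation. Here $p$ has \emph{finite} trace, so the reduced von Neumann algebra $pMp$ is a finite von Neumann algebra, and $p\mathcal{A}p$ is a unital $\ast$-subalgebra of $S(pMp,\tau|_{pMp})=S(pMp)$ containing $pMp$ itself (since every projection of $pMp$ lies in $P_\tau(M)\subset\mathcal{A}$, and $pMp$ is generated by its projections). This places us squarely in a setting to which the known local-derivation results for finite von Neumann algebras apply: because $M$ has no abelian direct summands, neither does $pMp$, and one invokes the theorem that every local derivation on such an algebra of measurable operators is a derivation. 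Applying this to $\Delta^{(p)}$ on $p\mathcal{A}p$ gives $\Delta^{(p)}(ab)=\Delta^{(p)}(a)b+a\Delta^{(p)}(b)$ for all $a,b\in p\mathcal{A}p$.

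It then remains to transfer the Leibniz rule for $\Delta^{(p)}=p\Delta(\cdot)p$ back to $\Delta$ itself on $\mathcal{F}_\tau(\mathcal{A})$. For this I would use Lemma~2, which shows $\Delta(\mathcal{F}_\tau(\mathcal{A}))\subset\mathcal{F}_\tau(\mathcal{A})$: if $a\in p\mathcal{A}p$ then $\Delta(a)\in\mathcal{F}_\tau(\mathcal{A})$ with support of trace at most $3\tau(s(a))$, so after enlarging $p$ to a possibly larger finite-trace projection $q$ dominating the supports of $x$, $y$, $xy$ and their $\Delta$-images, one has $q\Delta(x)q=\Delta(x)$ and similarly for the other terms. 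Consequently $\Delta^{(q)}(x)=q\Delta(x)q=\Delta(x)$, and the Leibniz identity for $\Delta^{(q)}$ collapses directly to $\Delta(xy)=\Delta(x)y+x\Delta(y)$, as desired.

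The main obstacle I anticipate is the second paragraph: ensuring that $p\mathcal{A}p$ genuinely falls within the scope of an available local-derivation theorem. One must verify carefully that $p\mathcal{A}p$ is again a \emph{solid} $\ast$-subalgebra of $S(pMp)$ containing all projections of $pMp$ (so that the structural hypotheses are inherited under compression), and that the absence of abelian summands passes to the finite corner $pMp$. If the intended citation is to the finite type~$\mathrm{I}$ result of \cite{Nur} or the general result of \cite{Had}, the delicate point is that those theorems assume the von Neumann algebra sits inside the subalgebra, which holds for the corner but must be checked explicitly; the compression step and the finiteness of $\tau|_{pMp}$ are precisely what make this legitimate.
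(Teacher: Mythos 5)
Your proposal is, in outline, exactly the paper's proof of Lemma~\ref{rest}: compress $\Delta$ to $\Delta^{(p)}(\cdot)=p\Delta(\cdot)p$ on the corner $p\mathcal{A}p$ for a single projection $p\in P_\tau(M)$ dominating the supports of $x$, $y$, $xy$ and of $\Delta(x)$, $\Delta(y)$, $\Delta(xy)$ (this is where the lemma $\Delta(\mathcal{F}_\tau(\mathcal{A}))\subset\mathcal{F}_\tau(\mathcal{A})$ enters), observe $pMp\subseteq p\mathcal{A}p$, invoke \cite[Theorem 1]{Had} for the local derivation $\Delta^{(p)}$ on the ring $p\mathcal{A}p\supseteq pMp$, and let the choice of $p$ collapse the Leibniz identity for $\Delta^{(p)}$ to that for $\Delta$. (Your hedge toward the finite type I result of \cite{Nur} would not suffice, since $pMp$ need not be of type I; the paper's citation is \cite[Theorem 1]{Had}.) However, two of your justifications do not hold up.

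First, the inclusion $pMp\subseteq p\mathcal{A}p$ cannot be deduced from ``every projection of $pMp$ lies in $\mathcal{A}$ and $pMp$ is generated by its projections.'' The algebra $\mathcal{A}$ carries no closure hypothesis, and the \emph{linear span} of the projections of $pMp$ need not be all of $pMp$: if, say, $pMp\cong M_2(L^\infty[0,1])$, then $\mathrm{diag}(f,f)$ with $f$ taking infinitely many values is not a finite linear combination of projections (evaluate the pointwise matrix trace; it could take only finitely many values). The correct mechanism, and the one the paper uses, is solidity of $\mathcal{A}$ in $S(M,\tau)$: for $a\in pMp$ one has $|a|\le\|a\|p$ with $\|a\|p\in\mathcal{A}$, hence $a\in\mathcal{A}$, so $a=pap\in p\mathcal{A}p$. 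You mention solidity of the corner only as a point ``to be checked'' at the end, but solidity of $\mathcal{A}$ itself is the entire content of this step.

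Second, your assertion that ``because $M$ has no abelian direct summands, neither does $pMp$'' is false for general corners: $M_2(\mathbb{C})$ has no abelian summands, yet its corner by a rank-one projection is $\mathbb{C}$. Since $Z(pMp)=Z(M)p$, the corner $pMp$ has an abelian central summand precisely when $zp$ is a nonzero abelian projection of $M$ for some central projection $z$, and nothing about the particular $p$ assembled from supports rules this out. In fairness, the paper's proof makes the same unsupported assertion at the same spot (``$\mathcal{A}$ has no abelian direct summands''), so this is a gap you share with the paper rather than one you introduced. It can be closed by enlarging the projection: letting $z_0$ be the largest central projection for which $z_0p$ is abelian, one adjoins to $p$ a projection equivalent to $z_0p$ and orthogonal to $p$ (possible because $M$ has no abelian summands, and harmless since the trace at most doubles); for the enlarged finite-trace projection $q$ the corner $qMq$ has no abelian central summands, and the argument then runs verbatim with $q$ in place of $p$.
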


\begin{proof}
Let $x, y \in \mathcal{F}_{\tau}(\mathcal{A}).$ Denote
$$
p=s(x)\vee s(y)\vee s(xy)\vee s(\Delta(x))\vee s(\Delta(y))\vee
s(\Delta(xy)).
$$
Since $\mathcal{F}_{\tau}(\mathcal{A})$ is an ideal in
$\mathcal{A}$ and $\Delta$ maps $\mathcal{F}_{\tau}(\mathcal{A})$
into itself, we obtain that the projection $p\in P_{\tau}(M).$
Consider the local derivation $\Delta^{(p)}$ on $p\mathcal{A}p.$
Since $p\in \mathcal{A}$ and $\mathcal{A}$ is a solid
$\ast$-subalgebra in $S(M, \tau)$ we get $pMp\subseteq
\mathcal{A}.$ Furthermore, $\mathcal{A}$ has no abelian direct
summands, and therefore \cite[Theorem 1]{Had} implies that
$\Delta^{(p)}$ is a derivation. Taking into account that $x, y\in
p\mathcal{A}p$ we obtain
$$
\Delta^{(p)}(xy)=\Delta^{(p)}(x)y+x\Delta^{(p)}(y).
$$
By construction of the projection $p$ we have
\begin{align*}
\Delta(xy)  = \Delta^{(p)}(xy) & =\Delta^{(p)}(x)y+x\Delta^{(p)}(y)= \\
& =  \Delta(x)y+x\Delta(y).
\end{align*}
This means that $\Delta$ is a derivation on
$\mathcal{F}_{\tau}(\mathcal{A}).$ This completes the proof.
\end{proof}

\begin{remark}\label{anni} Let $y\in \mathcal{A}$ and $yp=0$ for all $p\in
P_\tau(M).$   Since the map $x\mapsto  yxy^\ast$ is positive and
monotone continuous, taking $p\uparrow \mathbf{1}$ in
$ypy^\ast=0,$ we obtain that $yy^\ast = 0.$ Therefore $y = 0.$
\end{remark}

\begin{proof}[Proof of Theorem~\ref{main}]
 We shall show that
$$
\Delta (xy)=\Delta(x)y+x\Delta(y)
$$
for all $x, y\in \mathcal{A}.$ We consider the following two cases.

{\sc Case 1}. Let $x\in \mathcal{F}_{\tau}(\mathcal{A})$ and $y\in
\mathcal{A}.$ Since $\mathcal{F}_{\tau}(\mathcal{A})$ is an ideal in
$\mathcal{A}$ and $\Delta$ maps $\mathcal{F}_{\tau}(\mathcal{A})$
into itself, we obtain that the projection
$$
p=s(xy)\vee s(\Delta(xy))\vee s(\Delta(x)y)\vee s(x\Delta(y))
$$ has a finite trace. Taking into
account the equalities
$$
xyp=xy, \, \Delta(xy)p=\Delta(xy)
$$
and Lemma~\ref{rest} we obtain
\begin{align*}
\Delta(xy)   & =\Delta(xyp)= \Delta(xy)p+xy\Delta(p)=\\
 & = \Delta(xy)+xy\Delta(p),
\end{align*}
i.e. $xy\Delta(p)=0.$

Further
\begin{align*}
x\Delta(yp)p   & =x\Delta(y pp)-xyp\Delta(p)=\\
&= x\Delta(yp)-xy\Delta(p)= x\Delta(yp),
\end{align*}
i.e.
\begin{equation}\label{pp}
    x\Delta(yp)p = x\Delta(yp).
\end{equation}
Now taking into account \eqref{pp}, the equalities
$$
xy(\mathbf{1}-p)=0, \, x\Delta(y)p=x\Delta(y)
$$
and  the linearity of $\Delta$ we have
\begin{align*}
x\Delta(y)   & =x\Delta(y)p=x\Delta(yp+y(\mathbf{1}-p))p=\\
& = x\Delta(yp)p+x\Delta(y(\mathbf{1}-p))p=
x\Delta(yp)p+x D(y(\mathbf{1}-p))p= \\
 & = x\Delta(yp)p+x D(y(\mathbf{1}-p)p)-xy (\mathbf{1}-p)D(p)=
  x\Delta(yp),
\end{align*}
where $D$ is a derivation on $\mathcal{A}$ such that
$\Delta(y(\mathbf{1}-p))=D(y(\mathbf{1}-p)).$ Consequently
\begin{align*}
x\Delta(yp)   = x\Delta(y).
\end{align*}

Finally we obtain that
\begin{align*}
\Delta(xy)   & =\Delta(xyp)=   \Delta(x)yp+x\Delta(yp)= \\
& =   \Delta(x) y+x\Delta(y).
\end{align*}

Similar  as above we can check the case $x\in \mathcal{A}$ and
$y\in \mathcal{F}_{\tau}(\mathcal{A}).$

{\sc Case 2}. Let $x, y\in \mathcal{A}$ be arbitrary elements. Take
an arbitrary $q\in P_{\tau}(M).$ By the case 1 we have
\begin{align*}
\Delta(y)q   & =\Delta(yq)-y\Delta(q).
\end{align*}
Taking into account this equality and the case 1 we obtain
\begin{align*}
\Delta(xy)q   & =\Delta(xyq)-  xy \Delta(q)= \\
& =   \Delta(x)yq+x\Delta(yq)-xy\Delta(q)=\\
& =   \Delta(x)yq+x[\Delta(yq)-y\Delta(q)]=\\
& =   \Delta(x)yq+x\Delta(y)q,
\end{align*}
i.e.
\begin{align*}
\Delta(xy)q   & =  [\Delta(x)y+x\Delta(y)]q
\end{align*}
for all $q\in P_\tau(M).$ Taking into account Remark~\ref{anni} we
obtain
\begin{align*}
\Delta(xy)   & =  \Delta(x)y+x\Delta(y).
\end{align*}
The proof is complete.
\end{proof}

We stress that Theorem~\ref{main} is new even in the case of type
I$_\infty$ von Neumann factors.

For a Hilbert space $H$ by $\mathcal{F}(H)$  we denote  the
algebra of all finite rank operators in $B(H).$ Recall that a
standard operator algebra is any subalgebra of $B(H)$ which
contains $\mathcal{F}(H).$

Theorem~\ref{main} implies the following result.

\begin{corollary}
Let $H$ be a Hilbert space and let $\mathcal{U}$ be a standard
algebra in $B(H).$  Then any local derivation
$\Delta:\mathcal{U}\rightarrow \mathcal{U}$ is a spatial
derivation and implemented by an element $a\in B(H).$
\end{corollary}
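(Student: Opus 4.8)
The plan is to apply Theorem~\ref{main} to the von Neumann algebra $M = B(H)$, equipped with its standard trace $\tau$, and then translate the conclusion ``$\Delta$ is a derivation'' into the stronger spatial statement. First I would verify the hypotheses of Theorem~\ref{main}. The algebra $B(H)$ is a type I$_\infty$ (or type I$_n$, if $\dim H < \infty$) factor, hence is semi-finite and has no abelian direct summands provided $\dim H \geq 2$; the canonical trace $\tau$ (the usual operator trace, equal to $\sum_i \langle x e_i, e_i\rangle$ for positive $x$) is a faithful normal semi-finite trace. I would then observe that $S(B(H), \tau) = B(H)$, since every operator affiliated with $B(H)$ is already bounded, and that the projections of finite trace $P_\tau(M)$ are exactly the finite-rank projections. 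The key structural remark is that $\mathcal{F}_\tau(\mathcal{A}) = \mathcal{F}(H) \cap \mathcal{A}$, and more importantly that a standard operator algebra $\mathcal{U}$, being a $\ast$-subalgebra of $B(H)$ containing all finite-rank operators, is automatically solid and contains every finite-rank projection, so $p \in \mathcal{U}$ for all $p \in P_\tau(M)$. (Solidity is immediate: if $|x| \leq |y|$ with $y$ finite rank, then $x$ is finite rank too, so $x \in \mathcal{F}(H) \subseteq \mathcal{U}$; more generally the hypothesis $|x| \le |y|$ for $y \in \mathcal{U}$ forces $r(x) \le r(y)$ and boundedness, and membership follows from the standard-algebra containment of $\mathcal{F}(H)$ together with $y \in \mathcal{U}$.)

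With the hypotheses checked, Theorem~\ref{main} immediately gives that every local derivation $\Delta$ on $\mathcal{U}$ is a derivation. The remaining work is to upgrade ``derivation'' to ``spatial derivation implemented by some $a \in B(H)$.'' The plan here is to invoke the classical automatic-spatiality results for derivations of standard operator algebras: any derivation of a standard operator algebra on $H$ is of the form $D(x) = ax - xa$ for a fixed bounded operator $a \in B(H)$. This is exactly the content of the Larson--Sourour theorem \cite{Lar}, where it is shown that derivations (indeed all local derivations) of standard algebras are inner as implemented by an element of $B(H)$. Concretely, one would first check that $\Delta$ maps $\mathcal{F}(H)$ into itself (which follows from Lemma~\ref{rest}, since $\mathcal{F}(H) = \mathcal{F}_\tau(\mathcal{A})$), realize the restriction $\Delta|_{\mathcal{F}(H)}$ as an inner derivation implemented by some $a \in B(H)$ using the fact that $\mathcal{F}(H)$ acts irreducibly, and then extend to all of $\mathcal{U}$ by a density/continuity argument on the rank-one operators.

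The main obstacle I anticipate is producing the implementing element $a$ and showing the single $a$ works uniformly on all of $\mathcal{U}$, not just on $\mathcal{F}(H)$. The standard technique is to fix a rank-one projection $e = \xi \otimes \xi$ and analyze $\Delta$ on the rank-one operators $\xi \otimes \eta$; the derivation identity applied to products of rank-one operators pins down $a$ up to an additive scalar via $a\xi = \Delta(\xi\otimes\xi)\xi + (\text{correction})$, after which one verifies $\Delta(x) = ax - xa$ first for finite-rank $x$ and then extends. The extension step is where care is needed: one uses that for arbitrary $x \in \mathcal{U}$ and any finite-rank projection $p$, the element $pxp$ is finite rank, so $\Delta(pxp) = a(pxp) - (pxp)a$, and then one lets $p \uparrow \mathbf{1}$ strongly, exploiting the Leibniz rule and the fact that $\Delta$ is already known to be a genuine derivation to transfer the relation from $pxp$ to $x$. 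Alternatively, since Theorem~\ref{main} already furnishes that $\Delta$ is a derivation, one may simply cite the known classification of derivations of standard operator algebras directly, making the spatiality a corollary of the classical theory rather than something to be re-derived; I would state it this way to keep the argument short.
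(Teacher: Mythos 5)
Your overall route --- identify $S(B(H),\mathrm{tr})=B(H)$, check that $B(H)$ is a semi-finite factor without abelian summands whose finite-trace projections are exactly the finite-rank ones, apply Theorem~\ref{main} to conclude $\Delta$ is a derivation, and then invoke the classical spatiality of derivations on standard operator algebras --- is precisely the derivation the paper intends (the paper supplies no further details). But your verification of the hypotheses of Theorem~\ref{main} contains a genuine error: a standard operator algebra need \emph{not} be solid, and your argument for solidity is wrong. Solidity demands that $x\in S(M,\tau)$, $y\in\mathcal{U}$, $|x|\le|y|$ force $x\in\mathcal{U}$ for \emph{every} $y\in\mathcal{U}$, not only for finite-rank $y$. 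Your first case ($y$ of finite rank) is correct, but the ``more generally'' clause fails: take $\mathcal{U}=\mathcal{F}(H)+\mathbb{C}\mathbf{1}$, a standard $\ast$-algebra; every contraction $x$ satisfies $|x|\le|\mathbf{1}|$, yet most contractions do not belong to $\mathcal{U}$. (Moreover, by the paper's definition a standard algebra need not even be closed under the adjoint, so the phrase ``being a $\ast$-subalgebra'' is also unjustified.) Hence Theorem~\ref{main} cannot be applied as a black box; its hypotheses literally fail for general $\mathcal{U}$.

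The gap is repairable, and this is what a careful proof must say: solidity is used in the proof of Theorem~\ref{main} only once, in Lemma~\ref{rest}, to guarantee $pMp\subseteq\mathcal{A}$ for $p\in P_\tau(M)$ so that the theorem of \cite{Had} applies to $p\mathcal{A}p$. For a standard algebra this containment is automatic: if $p$ has finite rank then $pB(H)p\subseteq\mathcal{F}(H)\subseteq\mathcal{U}$ (and one may enlarge $p$ to have rank at least $2$ so that $pB(H)p$ has no abelian summand). With this observation the entire proof of Theorem~\ref{main} goes through verbatim for $\mathcal{U}$, so the corollary is correct --- but the argument must pass through the \emph{proof} of Theorem~\ref{main}, not merely its statement. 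Two smaller points: the spatiality of derivations on standard operator algebras, implemented by an element of $B(H)$, is a classical theorem of Chernoff (J.~Funct.~Anal.~12 (1973)), not of \cite{Lar}, which concerns $B(X)$ for Banach spaces $X$; citing that fact, as you suggest in your last sentence, is the intended final step, and the rank-one re-derivation you sketch is unnecessary. Your remaining identifications ($S(B(H),\mathrm{tr})=B(H)$, $\mathcal{F}_\tau(\mathcal{U})=\mathcal{F}(H)$, the trivial case $\dim H=1$) are correct.
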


\begin{remark} A similar result for  local derivations on $B(X),$
where $X$ is a Banach space, has been obtained in \cite[Corollar
3.7]{Bre1} under the additional assumption
 of continuity of the map with respect to the weak operator topology.
\end{remark}

\begin{remark} We note that if one replaces $S(M, \tau)$ with $S(M)$ all the results will remain
true. In this case $\mathcal{F}_{\tau}(\mathcal{A})$ is replaced by
the set of finite projections of $\mathcal{A}$ and instead of $\tau$
is used the dimension function.
\end{remark}

\section{Local derivations on algebra $\tau$-compact operators and Arens algebras}

\medskip

In this section we shall consider a local derivations on algebras
$\tau$-compact operators and on Arens algebras, respectively.

\subsection{algebra of $\tau$-compact operators} In this subsection we shall
consider an algebra of
$\tau$-compact operators.

In the algebra $S(M, \tau)$ consider the subset $S_0(M, \tau)$ of
all operators $x$ such that given any $\varepsilon>0$ there is a
projection $p\in P(M)$ with $\tau(p^{\perp})<\infty,\,xp\in M$ and
$\|xp\|<\varepsilon.$ The elements of $S_0(M, \tau)$ is called
\textit{$\tau$-compact operators} with respect to $M$ and $\tau.$
It is known \cite{Mur} that  $S_0(M, \tau)$ is a solid
$\ast$-subalgebra in $S(M, \tau)$ and a bimodule over $M,$ i.e.
$ax, xa\in S_0(M, \tau)$ for all $x\in S_0(M, \tau)$ and $a\in M.$
Note that if $M=B(H)$ and $\tau=tr,$ where $tr$ is the canonical
trace on $B(H),$ then $S_0(M, \tau)=K(H),$ where $K(H)$ is the
ideal of compact operators from $B(H).$

 The following properties of the algebra $S_0(M, \tau)$
are known (see  \cite{Str}):

Let  $M$ be a von Neumann algebra with a faithful normal
semi-finite trace $\tau.$ Then

1) $S(M, \tau)=M+S_0(M, \tau);$

2) $S_0(M, \tau)$ is an ideal in $S(M, \tau).$

Note that if the trace $\tau$ is finite then
$$S_0(M, \tau)=S(M, \tau)=S(M).$$
 It is well-known \cite{Str} $S_0(M, \tau)$ equipped with the measure topology is a complete
metrizable topological $\ast$-algebra.

It is clear that  $p\in S_0(M, \tau)$ for all $p\in P_\tau(M).$

Theorem~\ref{main} implies the following result.

\begin{theorem}\label{macom}
 Let $M$ be a
semi-finite von Neumann algebra without abelian direct summands
and let $\tau$ be a faithful normal semi-finite trace on $M.$ Then
every local derivation $\Delta$ on the algebra $S_0(M, \tau)$ is a
derivation.
\end{theorem}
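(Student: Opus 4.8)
The plan is to deduce this statement directly from Theorem~\ref{main} by verifying that $\mathcal{A} = S_0(M, \tau)$ satisfies all of its hypotheses. The assumptions on the ambient algebra — that $M$ is semi-finite without abelian direct summands, and that $\tau$ is a faithful normal semi-finite trace — are carried over verbatim from the statement of the present theorem. What remains is to confirm the two structural requirements that Theorem~\ref{main} imposes on the subalgebra: that it is a solid $\ast$-subalgebra of $S(M, \tau)$, and that $P_\tau(M) \subseteq \mathcal{A}$.

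Both conditions have already been recorded above. The solidity of $S_0(M, \tau)$ and its being a $\ast$-subalgebra of $S(M, \tau)$ are precisely the properties cited from \cite{Mur} and restated in this section. The inclusion $p \in S_0(M, \tau)$ for every $p \in P_\tau(M)$ is the clear fact noted immediately before the statement: given such a $p$ and any $\varepsilon > 0$, one may take the projection $q = p^{\perp}$ in the definition of $\tau$-compactness, since then $pq = 0$ (so $\|pq\| = 0 < \varepsilon$ and $pq \in M$) while $\tau(q^{\perp}) = \tau(p) < \infty$. Thus $\mathcal{A} = S_0(M, \tau)$ fulfils every assumption of Theorem~\ref{main}, and applying that theorem yields at once that any local derivation $\Delta$ on $S_0(M, \tau)$ is a derivation.

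Since no new analysis is needed, there is no substantive obstacle in this argument; the whole content is the bookkeeping that the hypotheses of Theorem~\ref{main} hold for $S_0(M, \tau)$, which the preceding observations already furnish. The one point worth stating explicitly is the verification that finite-trace projections lie in $S_0(M, \tau)$, as this is what closes the gap between the general subalgebra hypothesis and the concrete algebra of $\tau$-compact operators.
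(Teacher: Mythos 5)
Your proposal is correct and follows exactly the route the paper takes: the paper also derives Theorem~\ref{macom} as an immediate consequence of Theorem~\ref{main}, relying on the previously recorded facts that $S_0(M,\tau)$ is a solid $\ast$-subalgebra of $S(M,\tau)$ and that $p\in S_0(M,\tau)$ for all $p\in P_\tau(M)$. Your explicit check that finite-trace projections are $\tau$-compact (taking $q=p^\perp$ in the definition) is a nice touch, since the paper dismisses this as ``clear.''
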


\begin{remark}
If  $M$ is  an abelian von Neumann algebra with a faithful normal
semi-finite trace $\tau$ such that  the lattice $P(M)$ of
projections in $M$ is not atomic, then the algebra $S_0(M, \tau)$
admits a local derivation which is not a derivation (see
\cite[Theorem 3.2]{Nur}).
\end{remark}

In  \cite[Theorem 4.9]{AK13} it was proved that in the case when
$M$ is a properly infinite von Neumann algebra with a faithful
normal semi-finite trace $\tau,$ then any derivation $D$ on
$S_0(M, \tau)$  is a  spatial derivation and implemented by an
element $a\in S(M, \tau).$ Therefore Theorem~\ref{macom} implies
that

\begin{theorem}\label{proper}
 If $M$ is a properly infinite von Neumann algebra with a faithful normal
semi-finite trace $\tau,$ then any local derivation $\Delta:S_0(M,
\tau)\rightarrow S_0(M, \tau)$  is a  spatial derivation and
implemented by an element $a\in S(M, \tau).$
\end{theorem}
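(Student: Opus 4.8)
The plan is to realize this statement as a direct composition of Theorem~\ref{macom} with the spatiality result for derivations established in \cite[Theorem 4.9]{AK13}. The entire content reduces to verifying that the hypotheses of Theorem~\ref{macom} are automatically satisfied under the present assumptions, after which no genuine computation remains.

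First I would check that Theorem~\ref{macom} applies. The existence of a faithful normal semi-finite trace $\tau$ on $M$ means precisely that $M$ is semi-finite, so that hypothesis comes for free. The remaining hypothesis is the absence of abelian direct summands, and here I would argue that a properly infinite von Neumann algebra cannot possess one: an abelian direct summand corresponds to a nonzero central projection $z$ with $Mz$ abelian, and any such $z$ is a finite projection, contradicting the proper infiniteness of $M.$ Thus $M$ has no abelian direct summands, and Theorem~\ref{macom} yields that the local derivation $\Delta:S_0(M, \tau)\rightarrow S_0(M, \tau)$ is in fact a derivation.

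Next I would invoke \cite[Theorem 4.9]{AK13}, which treats exactly the properly infinite case: every derivation on $S_0(M, \tau)$ is spatial and implemented by an element $a\in S(M, \tau).$ Applying this to the derivation $\Delta$ produces $a\in S(M, \tau)$ with $\Delta(x)=ax-xa$ for all $x\in S_0(M, \tau);$ since $S_0(M, \tau)$ is an ideal in $S(M, \tau),$ the map $x\mapsto ax-xa$ indeed sends $S_0(M, \tau)$ into itself, so this is the asserted spatial derivation.

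The only point demanding any argument is the implication ``properly infinite $\Rightarrow$ no abelian direct summands''; this is a standard structural fact and presents no real obstacle. Once it is in place, the two cited theorems chain together directly, and there is nothing further to grind through.
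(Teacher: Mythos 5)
Your proposal is correct and follows exactly the paper's route: the paper also obtains Theorem~\ref{proper} by combining Theorem~\ref{macom} (the local derivation is a derivation) with the spatiality result of \cite[Theorem 4.9]{AK13} for properly infinite algebras. The only addition is your explicit (and correct) verification that proper infiniteness rules out abelian direct summands, a point the paper leaves implicit.
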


\subsection{Arens algebras} Now we are going to consider Arens algebras
associated with a von Neumann
algebra and  a semi-finite faithful normal trace.

Let $M$ be a von Neumann algebra with a faithful normal
semi-finite trace $\tau.$

Take $x\in S(M, \tau), x\geq 0$  and let $x = \int\limits_0^\infty
\lambda\, d e_\lambda$  be it spectral resolution. Denote
$\tau(x)=\sup\limits_{n\geq1} \int\limits_0^n \lambda\, d
\tau(e_\lambda).$

Given $p\geq1$ put $L^{p}(M, \tau)=\{x\in S(M,
\tau):\tau(|x|^{p})<\infty\}.$ It is known \cite{Mur} that
$L^{p}(M, \tau)$ is a Banach space with respect to the norm
$$\|x\|_p=(\tau(|x|^{p}))^{1/p},\quad x\in L^{p}(M, \tau).$$
  Consider the intersection
\begin{center}
$L^{\omega}(M, \tau)=\bigcap\limits_{p\geq1}L^{p}(M, \tau).$
\end{center}
It is proved in \cite{Abd} that  $L^{\omega}(M, \tau)$ is a
locally convex complete metrizable   $\ast$-algebra with respect
to the  topology $t$ generated by the family of norms
$\{\|\cdot\|_p\}_{p\geq1}.$
 The algebra
$L^{\omega}(M, \tau)$ is called a (non commutative) \textit{Arens
algebra}.

Note that $L^{\omega}(M, \tau)$ is  a solid $\ast$-subalgebra in
$S(M, \tau)$ and if $\tau$ is a finite trace then $M\subset
L^\omega(M, \tau).$

Further  consider the following spaces
$$L^{\omega}_2(M, \tau)=\bigcap\limits_{p\geq 2}L^p(M, \tau)$$
and
$$
M+ L^{\omega}_2(M, \tau)=\{x+y: x\in M, y\in L^{\omega}_2(M,
\tau)\}.
$$

It is known \cite{AAK2007} that $L^{\omega}_2(M, \tau)$  and $M+
L^{\omega}_2(M, \tau)$ are a $\ast$-algebras and $L^{\omega}(M,
\tau)$ is an ideal in $M+L^{\omega}_2(M, \tau).$

Note that if $\tau(\textbf{1})<\infty$ then $M+ L^{\omega}_2(M,
\tau)=L^{\omega}_2(M, \tau)=L^{\omega}(M, \tau).$

It is known \cite[Theorem 3.7]{AAK2007} that if
   $M$ is a  von Neumann algebra with a faithful normal
semi-finite trace $\tau$ then any derivation $D$ on
  $L^{\omega}(M, \tau)$ is spatial, moreover it is
  implemented by an element of $M+L^{\omega}_2(M, \tau),$ i. e.
\begin{equation}
\label{SP} D(x)=ax-xa, \quad x\in L^{\omega}(M, \tau)
\end{equation}
for some $a\in M+L^{\omega}_2(M, \tau).$ In particular, if $M$ is
abelian, then any derivation on $L^\omega(M, \tau)$ is zero.

Note that  $p\in L^\omega(M, \tau)$ for all $p\in P_\tau(M).$

We need the following auxiliary result.

\begin{lemma} \label{C}
Let   $M$ be a  semi-finite von Neumann algebra  with a faithful
normal semi-finite trace $\tau$ and with the center $Z(M).$ Then
every local derivation $\Delta$ on the algebra $L^\omega(M, \tau)$
is necessarily $P(Z(M))$-homogeneous, i.e.
$$
\Delta(zx)=z\Delta(x)
$$ for any central projection $z\in
P(Z(M))=P(M)\cap Z(M)$ and for all $x\in L^\omega(M, \tau).$
\end{lemma}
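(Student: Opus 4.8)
The plan is to reduce everything to the spatiality of derivations on $L^\omega(M,\tau)$ together with the fact that central projections commute with all $\tau$-measurable operators. By the cited result \cite[Theorem 3.7]{AAK2007}, every derivation $D$ on $L^\omega(M,\tau)$ has the form $D(w)=aw-wa$ for some $a\in M+L^\omega_2(M,\tau)\subset S(M, \tau)$. Since $z\in P(Z(M))$ lies in the center $Z(M)=M\cap M'$, it commutes (in the sense of the strong product) with every element of $S(M,\tau)$, and in particular $za=az$. Consequently, for every derivation $D$ on $L^\omega(M,\tau)$, every $z\in P(Z(M))$ and every $w\in L^\omega(M,\tau)$ one obtains the pull-out identity $D(zw)=a(zw)-(zw)a=z(aw-wa)=zD(w)$. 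Note that $zw\in L^\omega(M,\tau)$ because $L^\omega(M,\tau)$ is solid and $|zw|\leq|w|$, so all the expressions below are well defined even when $z\notin L^\omega(M,\tau)$.

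First I would fix $x\in L^\omega(M,\tau)$ and $z\in P(Z(M))$ and split $x=zx+(\mathbf{1}-z)x$, which by linearity of $\Delta$ gives $\Delta(x)=\Delta(zx)+\Delta((\mathbf{1}-z)x)$. By locality of $\Delta$ there exist derivations $D_1,D_2$ on $L^\omega(M,\tau)$ with $\Delta(zx)=D_1(zx)$ and $\Delta((\mathbf{1}-z)x)=D_2((\mathbf{1}-z)x)$; both are spatial by the cited theorem. Applying the pull-out identity to each summand then yields $\Delta(zx)=zD_1(x)$ and $\Delta((\mathbf{1}-z)x)=(\mathbf{1}-z)D_2(x)$.

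Now multiply the decomposition of $\Delta(x)$ on the left by $z$. Using $z^2=z$ one gets $z\Delta(zx)=z\,zD_1(x)=zD_1(x)=\Delta(zx)$, while the orthogonality $z(\mathbf{1}-z)=0$ gives $z\Delta((\mathbf{1}-z)x)=z(\mathbf{1}-z)D_2(x)=0$. Hence $z\Delta(x)=z\Delta(zx)+z\Delta((\mathbf{1}-z)x)=\Delta(zx)$, which is exactly the asserted $P(Z(M))$-homogeneity.

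The only nontrivial ingredient is the pull-out identity $D(zw)=zD(w)$; everything else is linearity together with the relations $z^2=z$ and $z(\mathbf{1}-z)=0$. Thus the main point to secure is the commutation $za=az$ of the implementing element $a\in M+L^\omega_2(M,\tau)$ with the central projection $z$, which holds because $z\in Z(M)\subset M'$ while $a$ is affiliated with $M$. I expect this commutation, and the accompanying observation that it allows one to apply a spatial derivation to $zx$ without requiring $z\in L^\omega(M,\tau)$, to be the crux of the argument; the remainder is an elementary orthogonal decomposition.
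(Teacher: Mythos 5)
Your proof is correct and follows essentially the same route as the paper: both invoke the spatiality of derivations on $L^\omega(M,\tau)$ from \cite[Theorem 3.7]{AAK2007} to obtain the pull-out identity $D(zx)=zD(x)$ for central projections $z$ (thereby avoiding any need for $z$ itself to lie in $L^\omega(M,\tau)$), and then conclude via the orthogonal decomposition $x=zx+(\mathbf{1}-z)x$ and multiplication by $z$. The only cosmetic difference is that you apply locality separately to $zx$ and $(\mathbf{1}-z)x$ with two derivations $D_1,D_2$, whereas the paper treats $zx$ alone and obtains the second identity by replacing $z$ with $\mathbf{1}-z$.
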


\begin{proof}
Take $z\in P(Z(M))$ and $x\in L^\omega(M, \tau).$ For the element
$zx$ by the definition of the local derivation $\Delta$ there
exists a derivation $D_a$ on $L^\omega(M, \tau)$ of the
form~\eqref{SP}  such that $\Delta(zx)=D_a(zx).$ Since the
projection $z$ is central, one has that
$$
D_a(zx)=[a, zx]=z[a,x]=zD_a(x).
$$
  Multiplying the equality $\Delta(zx)=D_a(zx)$  by $z$ we obtain
  $$
  z\Delta(zx)=zD_a(zx)=zD_a(x)=D_a(zx)=\Delta(zx),$$  i.e.
$$
(\mathbf{1}-z)\Delta(zx)=0.
$$
Replacing $z$ by $\mathbf{1}-z$ one finds
$$
z\Delta((\mathbf{1}-z)x)=0.$$ Therefore by the linearity of
$\Delta$ we have
$$
z\Delta(x)=z\Delta(zx)+z\Delta((\mathbf{1}-z)x)=z\Delta(zx)=\Delta(zx),
$$
and thus $z\Delta(x)=\Delta(zx).$ The proof is complete.
\end{proof}

\begin{theorem}\label{T1}
Let   $M$ be a  semi-finite von Neumann algebra  with a faithful
normal semi-finite trace $\tau.$ Then any local derivation
$\Delta$ on the algebra  $L^{\omega}(M,\tau)$ is a spatial
derivation of the form (\ref{SP}).
\end{theorem}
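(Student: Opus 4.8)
The plan is to split off the abelian part of $M$ by a central projection, dispatch the remaining part with Theorem~\ref{main}, and then upgrade the resulting derivation to a spatial one via \cite[Theorem 3.7]{AAK2007}. The homogeneity supplied by Lemma~\ref{C} is what makes the central decomposition interact well with $\Delta$.

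First I would take, by standard structure theory, the maximal central projection $z\in P(Z(M))$ for which $zM$ is abelian; then $(1-z)M$ has no abelian direct summand. Both corners are again semi-finite von Neumann algebras (the restriction of $\tau$ to a central summand is a faithful normal semi-finite trace), and $L^{\omega}(M,\tau)$ decomposes as $zL^{\omega}(M,\tau)\oplus(1-z)L^{\omega}(M,\tau)$ with $zL^{\omega}(M,\tau)=L^{\omega}(zM,\tau)$ and $(1-z)L^{\omega}(M,\tau)=L^{\omega}((1-z)M,\tau)$. By Lemma~\ref{C} the map $\Delta$ is $P(Z(M))$-homogeneous, hence leaves each central summand invariant, so I may treat the two restrictions $\Delta_1=\Delta|_{L^{\omega}(zM,\tau)}$ and $\Delta_2=\Delta|_{L^{\omega}((1-z)M,\tau)}$ separately.

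The preliminary point is that each $\Delta_i$ is itself a local derivation of the corresponding Arens algebra. For $x=zx$, choosing a derivation $D_a$ of the form \eqref{SP} with $\Delta(x)=D_a(x)$ and using centrality of $z$ together with Lemma~\ref{C}, one gets $\Delta(x)=z\Delta(x)=z[a,x]=[za,x]$ with $za\in zM+L^{\omega}_2(M,\tau)$, so that $y\mapsto[za,y]$ is a derivation of $L^{\omega}(zM,\tau)$; the same computation with $1-z$ in place of $z$ handles $\Delta_2$. Now $zM$ is abelian, so $S(zM,\tau)$ is commutative and every derivation of the form \eqref{SP} on $L^{\omega}(zM,\tau)$ vanishes; being a local derivation, $\Delta_1=0$. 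On the other corner $(1-z)M$ has no abelian direct summand and $L^{\omega}((1-z)M,\tau)$ is a solid $\ast$-subalgebra of $S((1-z)M,\tau)$ containing every projection of finite trace, so Theorem~\ref{main} applies and shows that $\Delta_2$ is a derivation.

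Finally I would reassemble. By \cite[Theorem 3.7]{AAK2007} the derivation $\Delta_2$ is spatial, say $\Delta_2(x)=[a_2,x]$ with $a_2\in(1-z)M+L^{\omega}_2(M,\tau)\subset M+L^{\omega}_2(M,\tau)$, and we may take $a_2=(1-z)a_2=a_2(1-z)$. Since $\Delta_1=0$ and $a_2z=za_2=0$, for an arbitrary $x\in L^{\omega}(M,\tau)$ one computes $[a_2,x]=[a_2,(1-z)x]=\Delta_2((1-z)x)=\Delta(x)$, which is exactly the asserted form \eqref{SP}. The only genuinely delicate step is the verification that the central cut-downs $\Delta_1,\Delta_2$ remain local derivations of the smaller Arens algebras; once that is in place, the argument is just the abelian/non-abelian central decomposition combined with the two already-established structural theorems.
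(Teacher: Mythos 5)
Your proof is correct and follows essentially the same route as the paper: the central decomposition of $M$ into its abelian and non-abelian parts, Lemma~\ref{C} to make $\Delta$ respect that decomposition, vanishing of (local) derivations on the abelian corner, Theorem~\ref{main} on the other corner, and \cite[Theorem 3.7]{AAK2007} to obtain spatiality. In fact you are somewhat more careful than the paper, which glosses over the verification that the central cut-downs are again local derivations of the smaller Arens algebras and the final reassembly of the implementing element $a$.
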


\begin{proof} Let   $M$ be a  semi-finite von Neumann algebra.
There exist mutually orthogonal central projections $z_1, z_2$ in
$M$ with $z_1+z_2=\mathbf{1}$ such that
\begin{itemize}
\item $z_1 M$ is abelian;
\item $z_2 M$ has no abelian summands.
\end{itemize}
By Lemma~\ref{C} the operator $\Delta$ maps $z_iL^\omega(M,
\tau)\equiv L^\omega(z_iM, \tau_i)$ into itself for $i=1,2,$ where
$\tau_i$ is the restriction of $\tau$ on $z_iM (i=1,2).$  As it
was mentioned above $z_1 \Delta$ is zero. By Theorem~\ref{main} we
obtain that $\Delta=z_2\Delta$ is a derivation. The proof is
complete.
\end{proof}

Note that if the trace $\tau$ is  finite Theorem~\ref{T1}  it was
given in \cite[Theorem 2.1]{AKNA}.

\section*{Acknowledgments}

The second named author (K.K.)
 acknowledges the MOHE grant ERGS13-024-0057 for support, and
International Islamic University Malaysia for kind hospitality.

 \bigskip

\end{document}